\newtheorem{thm}{Theorem}%[section]
\newtheorem{lem}[thm]{Lemma}
\newtheorem{prop}[thm]{Proposition}
\theoremstyle{definition}
\newtheorem{defn}[thm]{Definition}
\theoremstyle{remark}
\numberwithin{equation}{section}
\DeclareMathOperator{\Aut}{Aut}
\DeclareMathOperator{\Irr}{Irr}
\DeclareMathOperator{\PI}{PI}
\DeclareMathOperator{\CF}{CF}
\begin{document}

\title{Perfect Isometry Groups for Cyclic Groups of Prime Order}%
\author{Pornrat Ruengrot}%
\address{Science Division, Mahidol University International College\\
999 Phutthamonthon 4 Road, Salaya, Nakhonpathom, Thailand 73170}%
\email{pornrat.rue@mahidol.edu}%

%\thanks{}%
\subjclass{20C20}%
\keywords{perfect isometry, cyclic group}%

%\date{}%
%\dedicatory{}%
%\commby{}%
% ----------------------------------------------------------------
\begin{abstract}
A perfect isometry is an important relation between blocks of finite groups as many information about blocks are preserved by it. If we consider the group of all perfect isometries between a block to itself then this gives another information about the block that is also preserved by a perfect isometry. The structure of this group depends on the block and can be fairly simple or extremely complicated. In this paper we study the perfect isometry group for the block of $C_p$, the cyclic group of prime order, and completely describe the structure of this group. The result shows that any self perfect isometry for $C_p$ is essentially either induced by an element in $\mbox{Aut}(C_p)$, or obtained by multiplication by one of its linear characters, or a composition of both
\end{abstract}
\maketitle
% ----------------------------------------------------------------
\section{Introduction}

Let $p$ be a prime number. Let $(K,\mathcal{O},k)$ be a $p$-modular system where $\mathcal{O}$ is a
complete local discrete valuation ring with field of fraction $K$ of characteristic 0 and residue class field $k$ of characteristic $p$. We suppose that $K$ is sufficiently large for all groups considered. Let $G$ be a finite group and $B$ and block of $\mathcal{O}G$. Denote by $R_K(B)$ the free abelian group generated by $\Irr(B)$. Let $H$ be another finite group and $A$ a block of $\mathcal{O}H$.

\subsection{Perfect isometries}
\begin{defn}\cite[Definition 1.1]{Broue}
A generalized character $\mu:G\times H\longrightarrow K$ is said to be \emph{perfect} if it satisfies the following two conditions.
\begin{enumerate}
  \item[(i)] [Integrality] For all $(g,h)\in G\times H$ we have
  \[\frac{\mu(g,h)}{|C_G(g)|}\in\mathcal{O}\quad\text{and}\quad\frac{\mu(g,h)}{|C_H(h)|}\in\mathcal{O}.\]
  \item[(ii)] [Separation] If $\mu(g,h)\neq 0$ then $g$ is
  a $p$-regular element of $G$ if and only if $h$ is $p$-regular element of $H$.
\end{enumerate}
\end{defn}

Let $\mu$ be a generalized character of $G\times H$. Then, following Broue \cite{Broue}, $\mu$ defines two linear maps \[I_\mu:R_K(A)\longrightarrow R_K(B)\quad,\quad
R_\mu:R_K(B)\longrightarrow R_K(A)\] defined by
\begin{equation}\label{fml:I_given_mu}
    I_\mu(\beta)(g)=\frac{1}{|H|}\sum_{h\in H}
\mu(g,h^{-1})\beta(h) =\, \langle\mu(g,\cdot),\beta\rangle_H
\end{equation} and
\begin{equation}\label{fml:R_given_mu}
    R_\mu(\alpha)(h)=\frac{1}{|G|}\sum_{g\in G}
\mu(g^{-1},h)\alpha(g) =\, \langle\mu(\cdot,h),\alpha\rangle_G
\end{equation} for $\alpha\in R_K(B)$ and $\beta\in R_K(A)$. Here $\langle\,,\,\rangle_G$ denotes the standard inner product for class functions of $G$. Furthermore, the maps $I_\mu$ and $R_\mu$ are adjoint to each other with respect to $\langle\,,\,\rangle$. That is,
\[\langle I_\mu(\beta),\alpha\rangle_G=\langle\beta,R_\mu(\alpha)\rangle_H\] for all
$\alpha\in  R_K(B),\beta\in  R_K(A)$.

The definition of perfect character $\mu$ can also be stated in terms of conditions on the maps $I_\mu$ and $R_\mu$ as follows.

Let $\CF(G,B;K)$ be the subspace of $\CF(G;K)$ of class functions
generated by $\Irr(B)$. Let
$\CF(G,B;\mathcal{O})$ be the subspace of $\CF(G,B;K)$ of $\mathcal{O}$-valued
class functions. Let $\CF_{p'}(G,B;K)$ be the subspace of class functions
$\alpha\in \CF(G,B;K)$ vanishing outside $p$-regular elements.

If $\mu$ is a generalized character of $G\times H$, the linear maps $I_\mu,R_\mu$ defined in
(\ref{fml:I_given_mu}),(\ref{fml:R_given_mu}) can be extended in the usual
way to the linear maps
\[I_\mu:\CF(H,A;K)\longrightarrow \CF(G,B;K)\quad,\quad R_\mu:\CF(G,A;K)\longrightarrow \CF(H,B;K).\]

\begin{prop}\label{prop_broue4.1}\cite[Proposition 4.1]{Broue} A generalized character $\mu$ is perfect if and only if
\begin{enumerate}
  \item[(i*)] $I_\mu$ gives a map from $\CF(H,A;\mathcal{O})$ to $\CF(G,B;\mathcal{O})$ and $R_\mu$ gives a map from $\CF(G,B;\mathcal{O})$ to $\CF(H,A;\mathcal{O})$.
  \item[(ii*)] $I_\mu$ gives a map from $\CF_{p'}(H,A;K)$ to $\CF_{p'}(G,B;K)$ and $R_\mu$ gives a map from $\CF_{p'}(G,B;K)$ to $\CF_{p'}(H,A;K)$.
\end{enumerate}
\end{prop}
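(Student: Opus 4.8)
The plan is to treat the two defining conditions of a perfect character separately, matching the \emph{integrality} condition (i) against (i*) and the \emph{separation} condition (ii) against (ii*). The computational engine for both is the effect of $I_\mu$ and $R_\mu$ on the characteristic functions of conjugacy classes. Writing $\mathbf 1_s$ for the characteristic function of the class of $s$, a one-line calculation using that $\mu$ is a class function in each variable gives
\[
I_\mu(\mathbf 1_s)(g)=\frac{1}{|H|}\sum_{h\in H}\mu(g,h^{-1})\mathbf 1_s(h)=\frac{\mu(g,s^{-1})}{|C_H(s)|},
\]
and symmetrically $R_\mu(\mathbf 1_t)(h)=\mu(t^{-1},h)/|C_G(t)|$. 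Thus the pointwise quantities $\mu(g,h)/|C_H(h)|$ and $\mu(g,h)/|C_G(g)|$ of condition (i), as well as the vanishing pattern in condition (ii), are precisely the values of $I_\mu$ and $R_\mu$ on these probe functions. Since $I_\mu$ and $R_\mu$ are defined on the block subspaces, $\mu$ is supported on $\Irr(B)\times\Irr(A)$; in particular the section $\mu(g,\cdot)$ lies in $\CF(H,A;K)$, so upon extending $I_\mu$ to all of $\CF(H;K)$ by the same formula we have $I_\mu=I_\mu\circ\pi_A$, where $\pi_A$ is the orthogonal projection onto the block subspace, and likewise $R_\mu=R_\mu\circ\pi_B$.

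The forward implications are the easy half. Expanding by conjugacy classes,
\[
I_\mu(\beta)(g)=\sum_{C}\frac{\mu(g,h_C^{-1})}{|C_H(h_C)|}\,\beta(h_C),
\]
the sum running over classes $C$ with representative $h_C$. If (i) holds then each coefficient $\mu(g,h_C^{-1})/|C_H(h_C)|$ lies in $\mathcal O$, so $I_\mu$ carries $\mathcal O$-valued $\beta$ to $\mathcal O$-valued functions, and the image lies in $\CF(G,B;K)$ by the block support of $\mu$; this gives (i*), and $R_\mu$ is handled identically. For separation, if $\beta\in\CF_{p'}(H,A;K)$ then only the $p$-regular $h_C$ contribute to the sum, and for such $h_C$ condition (ii) forces $\mu(g,h_C^{-1})=0$ whenever $g$ is $p$-singular; hence $I_\mu(\beta)$ vanishes off the $p$-regular classes, giving (ii*).

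For the converse implications I would feed the probe functions $\mathbf 1_s$ back through (i*) and (ii*). Fixing $g_0,h_0$, the class-sum identity gives $\mu(g_0,h_0^{-1})/|C_H(h_0)|=I_\mu(\mathbf 1_{h_0})(g_0)=I_\mu(\pi_A\mathbf 1_{h_0})(g_0)$; if $\pi_A\mathbf 1_{h_0}$ can be certified to lie in $\CF(H,A;\mathcal O)$, then (i*) yields that this value is in $\mathcal O$, which is one half of (i), the other half coming from $R_\mu$ and $\pi_B$. Likewise, taking $h_0$ $p$-regular and $g_0$ $p$-singular, $\mathbf 1_{h_0}$ is supported on $p$-regular classes, and if $\pi_A\mathbf 1_{h_0}\in\CF_{p'}(H,A;K)$ then (ii*) forces $I_\mu(\mathbf 1_{h_0})(g_0)=\mu(g_0,h_0^{-1})/|C_H(h_0)|=0$; the symmetric use of $R_\mu$ disposes of the case $g_0$ $p$-regular, $h_0$ $p$-singular, and together these give separation (ii).

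The crux, and the step I expect to cost the most care, is therefore the \emph{stability lemma}: block projection $\pi_A$ preserves both $\mathcal O$-valuedness and $p'$-support, so that $\pi_A\mathbf 1_{h_0}\in\CF(H,A;\mathcal O)$ and, when $h_0$ is $p$-regular, $\pi_A\mathbf 1_{h_0}\in\CF_{p'}(H,A;K)$. For integrality I would use the $\mathcal O$-linear correspondence $f\mapsto\Phi(f)=\sum_{h}f(h^{-1})h$ between $\CF(H;K)$ and $Z(KH)$, under which $f$ is $\mathcal O$-valued exactly when $\Phi(f)\in Z(\mathcal O H)$ and under which $\Phi(\pi_A f)=e_A\Phi(f)$, because $\Phi(\chi)=\tfrac{|H|}{\chi(1)}e_\chi$ on irreducibles. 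Since the block idempotent $e_A$ lies in $Z(\mathcal O H)$, multiplication by $e_A$ preserves $Z(\mathcal O H)$, giving integral stability at once (here $\Phi(\mathbf 1_{h_0})$ is the class sum of $h_0^{-1}$, manifestly in $Z(\mathcal O H)$). The $p'$-stability is the genuinely block-theoretic point: it amounts to the compatibility of the decomposition of $\CF_{p'}(H;K)$ into block components with $\pi_A$, equivalently to the fact that restriction to $p$-regular classes and $\pi_A$ act on disjoint data because the decomposition matrix is block-diagonal. I would isolate this as a lemma and verify it via the block-diagonality of (generalized) decomposition numbers; everything else in the argument is then bookkeeping with the two probe identities.
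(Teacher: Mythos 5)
The paper offers no proof of this proposition at all---it is quoted from Brou\'e with a citation---so there is nothing internal to compare against; measured against the standard argument (which is essentially Brou\'e's own), your proposal reproduces it faithfully and its computations are correct. The probe identity $I_\mu(\mathbf{1}_s)(g)=\mu(g,s^{-1})/|C_H(s)|$, the forward implications, the reduction of the converse to the two stability properties of the block projection $\pi_A$, and your proof of integral stability via $\Phi(\pi_A f)=e_A\Phi(f)$ together with $e_A\in Z(\mathcal{O}H)$ are all sound.

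Two caveats. First, the sentence ``since $I_\mu$ and $R_\mu$ are defined on the block subspaces, $\mu$ is supported on $\Irr(B)\times\Irr(A)$'' is circular: where you choose to apply the maps cannot force a support condition on $\mu$. Block support must be taken as a standing hypothesis of the proposition (in Brou\'e's formulation $\mu$ is a generalized character attached to the pair of blocks; in this paper every $\mu$ that occurs is $\mu_I$ for a linear map $I:R_K(A)\to R_K(B)$, which has that support by the displayed formula of the preceding Lemma). Without that hypothesis the statement itself breaks in both directions: the regular character of $G\times H$ satisfies (i) and (ii), yet $I_\mu(\beta)=\beta(1)\rho_G$ meets every block of $G$, so (i*) fails whenever $G$ has more than one block; conversely, a $\mu$ supported on a different pair of blocks makes $I_\mu\circ\pi_A$ and $R_\mu\circ\pi_B$ zero, so (i*) and (ii*) hold vacuously for many non-perfect $\mu$. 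So state the support condition as an assumption rather than derive it. Second, the $p'$-stability of $\pi_A$---that is, $\sum_{\chi\in\Irr(A)}\chi(h_0^{-1})\chi(h)=0$ for $h_0$ $p$-regular and $h$ $p$-singular---is precisely Osima's block orthogonality theorem; you correctly isolate it as the crux and point at the right proof (block-diagonality of generalized decomposition numbers, i.e.\ Brauer's second main theorem), but in your write-up it remains a deferred citation. Citing it is legitimate since it is classical, but be aware that it carries essentially all the block-theoretic weight of the proposition; with it in hand, the rest of your argument is, as you say, bookkeeping with the probe identities.
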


We have seen that a generalized character $\mu$ of $G\times H$ defines a linear map $I_\mu:R_K(A)\longrightarrow R_K(B)$. In fact, any linear map $I:R_K(A)\longrightarrow R_K(B)$ is of the form $I_\mu$ for some generalized character $\mu$ of $G\times H$.

\begin{lem}
  Let $I:R_K(A)\longrightarrow R_K(B)$ be a linear map. Then, there is a generalized character $\mu_I$ of $G\times H$ such that $I=I_{\mu_I}$. Furthermore $\mu_I$ is defined by \[\mu_I(g,h)=\sum_{\chi\in\Irr(A)}I(\chi)(g)\chi(h),\quad\mbox{for all }g\in G,h\in H.\]
\end{lem}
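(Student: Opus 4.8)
The plan is to verify directly that the proposed function $\mu_I$ is a generalized character of $G\times H$, and then that the map $I_{\mu_I}$ it induces agrees with $I$ on the basis $\Irr(A)$ of $R_K(A)$; linearity then finishes the identification.

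First I would check that $\mu_I$ is a generalized character. Recall that the irreducible characters of $G\times H$ are precisely the external products $\psi\otimes\chi$, where $(\psi\otimes\chi)(g,h)=\psi(g)\chi(h)$ for $\psi\in\Irr(G)$ and $\chi\in\Irr(H)$, so it suffices to write $\mu_I$ as a $\mathbb{Z}$-linear combination of such products. Since $I$ takes values in $R_K(B)$, for each $\chi\in\Irr(A)$ I may expand $I(\chi)=\sum_{\psi\in\Irr(B)}n_{\chi,\psi}\,\psi$ with integers $n_{\chi,\psi}$. Substituting into the defining formula gives $\mu_I(g,h)=\sum_{\chi\in\Irr(A)}\sum_{\psi\in\Irr(B)}n_{\chi,\psi}\,\psi(g)\chi(h)$, an integer combination of the characters $\psi\otimes\chi$ with $\psi\in\Irr(B)\subseteq\Irr(G)$ and $\chi\in\Irr(A)\subseteq\Irr(H)$. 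Hence $\mu_I$ is a generalized character of $G\times H$.

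Next I would verify $I_{\mu_I}=I$. As both are $\mathbb{Z}$-linear maps on $R_K(A)$, it is enough to show they agree on an arbitrary $\beta\in\Irr(A)$. Using the definition (\ref{fml:I_given_mu}) and substituting the formula for $\mu_I$, I would compute $I_{\mu_I}(\beta)(g)=\frac{1}{|H|}\sum_{h\in H}\mu_I(g,h^{-1})\beta(h)=\sum_{\chi\in\Irr(A)}I(\chi)(g)\cdot\frac{1}{|H|}\sum_{h\in H}\chi(h^{-1})\beta(h)$. The inner sum is exactly the pairing $\langle\beta,\chi\rangle_H$, which equals $\delta_{\chi,\beta}$ by the orthonormality of irreducible characters, since both $\chi$ and $\beta$ lie in $\Irr(A)\subseteq\Irr(H)$. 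Thus the expression collapses to $I(\beta)(g)$ for every $g\in G$, whence $I_{\mu_I}(\beta)=I(\beta)$ and therefore $I_{\mu_I}=I$.

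The argument is essentially bookkeeping and I do not anticipate a genuine obstacle; the only points that require care are observing that $I$ has \emph{integer} coefficients (so that $\mu_I$ is a bona fide generalized character, not merely a rational virtual one) and handling the $h^{-1}$ in the definition of $I_\mu$ correctly so that the orthogonality relation is applied in the right form.
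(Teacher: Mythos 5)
Your proposal is correct and follows essentially the same route as the paper: expand $\mu_I$ inside the definition of $I_{\mu_I}$, interchange the sums, and recognize the inner sum as the inner product $\langle\beta,\chi\rangle_H$. The only cosmetic differences are that you verify agreement on the basis $\Irr(A)$ via orthonormality and then invoke linearity, where the paper computes directly for arbitrary $\beta\in R_K(A)$ using $\beta=\sum_\varphi\langle\beta,\varphi\rangle\varphi$, and that you spell out why $\mu_I$ is a genuine generalized character (integrality of the coefficients $n_{\chi,\psi}$), a point the paper dismisses as clear.
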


\begin{proof}
Define $\mu_I$ as in the lemma. It is clear that $\mu_I$ is a generalized character of $G\times H$. We will show that $I_{\mu_I}=I$. Let $\beta\in  R_K(A)$ and $g\in G$, then
\begin{eqnarray*}
  I_{\mu_I}(\beta)(g) &=& \frac{1}{|H|}\sum_{h\in H}\mu(g,h^{-1})\beta(h) \\
   &=& \frac{1}{|H|}\sum_{h\in H}\left(\sum_{\varphi\in
   \Irr(A)}I(\varphi)(g)\varphi(h^{-1})\right)\beta(h)\\
   &=&\sum_{\varphi\in \Irr(A)}\frac{1}{|H|}\left(\sum_{h\in
   H}\varphi(h^{-1})\beta(h)\right)I(\varphi)(g)\\
   &=&\sum_{\varphi\in \Irr(A)}\langle\beta,\varphi\rangle I(\varphi)(g)\\
   &=&I\left(\sum_{\varphi\in \Irr(A)}\langle\beta,\varphi\rangle
   \varphi\right)(g)\\
   &=&I(\beta)(g).
\end{eqnarray*}
\end{proof}

\begin{defn}\cite[Definition 1.4]{Broue}
  Let $I:R_K(A)\longrightarrow R_K(B)$ be a linear map. We say that $I$ is a \emph{perfect isometry} if $I$ is an isometry and $I=I_\mu$ where $\mu$ is perfect.
\end{defn}

It turns out that if $I:R_K(A)\longrightarrow R_K(B)$ is an isometry and $I=I_\mu$, then $R_\mu$ is the inverse of $I_\mu$.

\begin{lem}\label{lem:I-iso-R-inverse} Suppose that $I_\mu$ is an isometry. Then $R_\mu$ is also an isometry and $(I_\mu)^{-1}=R_\mu$.
\end{lem}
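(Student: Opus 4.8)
The whole argument rests on the adjointness relation $\langle I_\mu(\beta),\alpha\rangle_G=\langle\beta,R_\mu(\alpha)\rangle_H$ recorded above, together with the fact that $\Irr(A)$ and $\Irr(B)$ are orthonormal bases, so that $\langle\,,\,\rangle_H$ and $\langle\,,\,\rangle_G$ restrict to nondegenerate pairings on $R_K(A)$ and $R_K(B)$ respectively. The plan is to first show that $R_\mu$ is a left inverse of $I_\mu$, then upgrade this to a two-sided inverse, and finally verify the isometry property for $R_\mu$ by running the same computation in the other direction.

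First I would show $R_\mu\circ I_\mu=\mathrm{id}_{R_K(A)}$. Fix $\beta\in R_K(A)$. For every $\beta'\in R_K(A)$, applying adjointness and then the hypothesis that $I_\mu$ is an isometry gives
\[\langle\beta',R_\mu(I_\mu(\beta))\rangle_H=\langle I_\mu(\beta'),I_\mu(\beta)\rangle_G=\langle\beta',\beta\rangle_H.\]
Since $\beta'$ is arbitrary and $\langle\,,\,\rangle_H$ is nondegenerate on $R_K(A)$, this forces $R_\mu(I_\mu(\beta))=\beta$, whence $R_\mu I_\mu=\mathrm{id}$. Next, since an isometry is in particular a bijection between $R_K(A)$ and $R_K(B)$ (so that $(I_\mu)^{-1}$ is meaningful), the identity $R_\mu I_\mu=\mathrm{id}$ identifies $R_\mu$ as the two-sided inverse $(I_\mu)^{-1}$; in particular $I_\mu R_\mu=\mathrm{id}_{R_K(B)}$. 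Finally, to see that $R_\mu$ is itself an isometry, I would compute, for $\alpha,\alpha'\in R_K(B)$,
\[\langle R_\mu(\alpha),R_\mu(\alpha')\rangle_H=\langle I_\mu R_\mu(\alpha),\alpha'\rangle_G=\langle\alpha,\alpha'\rangle_G,\]
using adjointness and then $I_\mu R_\mu=\mathrm{id}$ in the last step.

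The step I expect to require the most care is the passage from a one-sided to a genuine two-sided inverse. If one reads \emph{isometry} as merely \emph{inner-product preserving}, then bijectivity is not immediate: the relation $R_\mu I_\mu=\mathrm{id}$ already makes $I_\mu$ injective, but surjectivity requires knowing $|\Irr(A)|=|\Irr(B)|$. This equality is built into the notion of an isometry of block character groups (or may be obtained by a dimension count, since the left inverse forces $\dim R_K(A)\le\dim R_K(B)$ and a symmetric argument with the adjoint forces the reverse inequality). Everything beyond this point is a mechanical application of adjointness and nondegeneracy, so the only genuine content of the lemma is the interplay between the adjointness already proved and the isometry hypothesis.
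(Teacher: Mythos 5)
Your proof is correct and reaches the lemma's conclusion, but by a genuinely different route than the paper. The paper argues character by character: it uses that $I_\mu$ sends $\Irr(A)$ into $\pm\Irr(B)$, deduces via adjointness that each coefficient $\langle\beta,R_\mu(\alpha)\rangle_H$ lies in $\{0,\pm1\}$ and hence that $R_\mu(\alpha)\in\pm\Irr(A)$, and then applies adjointness twice more to obtain $I_\mu(R_\mu(\alpha))=\alpha$ and $R_\mu(I_\mu(\beta))=\beta$ on irreducibles. Your argument is basis-free: adjointness plus the isometry hypothesis plus nondegeneracy gives $R_\mu\circ I_\mu=\mathrm{id}$ at once; bijectivity of $I_\mu$ upgrades this to a two-sided inverse; one more application of adjointness gives the isometry property of $R_\mu$. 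Your version is arguably cleaner, since it never needs the intermediate fact $R_\mu(\alpha)\in\pm\Irr(A)$ --- a step which in the paper's own proof is terse and itself implicitly uses bijectivity of $I_\mu$ (surjectivity to rule out $R_\mu(\alpha)=0$, injectivity on irreducibles to rule out two nonzero coefficients). Both proofs ultimately rest on the same two ingredients: the adjointness relation and the (bijective) isometry hypothesis.

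One caveat: your parenthetical fallback for surjectivity does not work. From $R_\mu\circ I_\mu=\mathrm{id}$ you get $\dim R_K(A)\le\dim R_K(B)$, but there is no ``symmetric argument with the adjoint'' yielding the reverse inequality; running the adjoint relation in the other direction only reproduces the same inequality, since you would need to already know that $R_\mu$ preserves inner products. In fact, if ``isometry'' meant merely ``inner-product preserving,'' the lemma would be false: take $H=1$, so $\Irr(A)=\{1\}$, and $\mu(g,h)=\chi(g)$ for a fixed $\chi\in\Irr(B)$ with $|\Irr(B)|>1$; then $I_\mu(1)=\chi$ preserves inner products, yet $R_\mu$ annihilates the orthogonal complement of $\chi$, so $I_\mu\circ R_\mu\neq\mathrm{id}$ and $R_\mu$ is not an isometry. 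So surjectivity must come from the definition itself, exactly as your primary justification says (and as the paper's convention confirms, since it attaches to every isometry a bijection $I^+:\Irr(A)\to\Irr(B)$); the dimension-count alternative should be deleted.
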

\begin{proof}
Let $\beta\in \Irr(A)$. Since $I_\mu$ is an isometry, we have $I_\mu(\beta)\in\pm \Irr(B)$. Let $\alpha\in
\Irr(B)$. Since $I_\mu$ and $R_\mu$ are adjoint,
\[\langle I_\mu(\beta),\alpha\rangle_G\,=\,\langle\beta,R_\mu(\alpha)\rangle_H.\]
As the left hand side takes values in $\{0,\pm 1\}$, this implies that $R_\mu(\alpha)\in\pm \Irr(A)$. By adjointness again,\[\langle I_\mu(R_\mu(\alpha)),\alpha\rangle_G\,=\,\langle R_\mu(\alpha),R_\mu(\alpha)\rangle_H\,=1.\] Since $I_\mu(R_\mu(\alpha))\in\pm \Irr(B)$, this forces $I_\mu(R_\mu(\alpha))=\alpha$. Similarly,
\[1=\,\langle I_\mu(\beta),I_\mu(\beta)\rangle_G\,=\,\langle\beta,R_\mu(I_\mu(\beta))\rangle_H\]
implies that $R_\mu(I_\mu(\beta))=\beta$. Hence $R_\mu=(I_\mu)^{-1}$ and
$R_\mu$ is an isometry.
\end{proof}

If $I: R_K(A)\longrightarrow  R_K(B)$ is an isometry, then
$I(\chi)\in\pm\Irr(B)$ for all $\chi\in\Irr(B)$. So $I$ defines a
bijection $I^+:\Irr(A)\longrightarrow\Irr(B)$ and a sign function
$\varepsilon_I:\Irr(A)\longrightarrow\{\pm 1\}$ such that
$I(\chi)=\varepsilon_I(\chi)I^+(\chi)$ for $\chi\in\Irr(A)$. This gives a
bijection with signs between $\Irr(A)$ and $\Irr(B)$. We say that a sign is \emph{all-positive} if $\varepsilon_I(\chi)=+1\,\forall\chi\in\Irr(A)$ and \emph{all-negative} if $\varepsilon_I(\chi)=-1\,\forall\chi\in\Irr(A)$. We also say that a sign is \emph{homogenous} if it is either all-positive or all-negative.

\subsection{Perfect isometry group}
We will now restrict our attention to the case where $A=B$. From Proposition \ref{prop_broue4.1} and Lemma \ref{lem:I-iso-R-inverse} it is clear that if $I,J:R_K(B)\longrightarrow R_K(B)$ are perfect isometries then so are $I^{-1}$ and $I\circ J$. Moreover, the identity map $id:R_K(B)\longrightarrow R_K(B)$ is also a perfect isometry. This leads us to define the following group.

\begin{defn}
  The set of all perfect isometries $I:R_K(B)\longrightarrow R_K(B)$ forms a group under composition. We will call this group the \emph{perfect isometry group} for $B$ and denote it by $\PI(B)$.
\end{defn}

This group is invariant under perfect isometries:

\begin{lem}
  If $A,B$ are any two blocks and there exists a perfect isometry $I:R_K(A)\longrightarrow R_K(B)$, then $\PI(A)\cong\PI(B)$.
\end{lem}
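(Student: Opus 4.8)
The plan is to transport the group $\PI(A)$ onto $\PI(B)$ by conjugating with the given perfect isometry $I$. Concretely, I would define
\[
\Phi:\PI(A)\To\PI(B),\qquad \Phi(J)=I\circ J\circ I^{-1},
\]
and prove that $\Phi$ is a group isomorphism. Here $J:R_K(A)\To R_K(A)$ and, by Lemma \ref{lem:I-iso-R-inverse}, $I^{-1}=R_\mu:R_K(B)\To R_K(A)$, so $\Phi(J)$ is genuinely an endomorphism of $R_K(B)$.

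First I would check that $\Phi(J)$ really lands in $\PI(B)$, i.e.\ that it is a perfect isometry. It is clearly an isometry, being a composite of isometries. For perfection the essential input is that the composite of two perfect isometries between (possibly distinct) blocks is again a perfect isometry, and that the inverse of a perfect isometry is a perfect isometry. Both follow from Brou\'e's characterisation in Proposition \ref{prop_broue4.1}: given perfect isometries $I_1,I_2$, any composite $I_2\circ I_1$ equals $I_\nu$ for some $\nu$ by the earlier Lemma, and conditions (i*), (ii*) for $I_1$ and $I_2$ compose to give the same conditions for $I_\nu=I_2\circ I_1$, while the adjoints compose in the opposite order as $R_\nu=R_{\mu_1}\circ R_{\mu_2}$ and hence also satisfy (i*), (ii*); so $\nu$ is perfect. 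Likewise $I^{-1}=R_\mu$ satisfies (i*), (ii*) by the symmetry of those conditions in the pair $(I_\mu,R_\mu)$. These are exactly the facts already noted in the excerpt for the self-block case, now invoked for possibly different blocks. Granting them, $\Phi(J)=I\circ J\circ I^{-1}$ is a composite of three perfect isometries and so lies in $\PI(B)$.

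Next I would verify that $\Phi$ is a homomorphism, which is purely formal:
\[
\Phi(J_1\circ J_2)=I\circ J_1\circ J_2\circ I^{-1}
=(I\circ J_1\circ I^{-1})\circ(I\circ J_2\circ I^{-1})
=\Phi(J_1)\circ\Phi(J_2).
\]
For bijectivity I would introduce $\Psi:\PI(B)\To\PI(A)$, $\Psi(L)=I^{-1}\circ L\circ I$; the same reasoning shows $\Psi$ is well defined, and cancelling $I^{-1}\circ I$ and $I\circ I^{-1}$ gives $\Psi\circ\Phi=\mathrm{id}_{\PI(A)}$ and $\Phi\circ\Psi=\mathrm{id}_{\PI(B)}$. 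Hence $\Phi$ is an isomorphism and $\PI(A)\cong\PI(B)$.

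I expect the only genuine obstacle to be the first step, namely confirming that perfect isometries compose and invert to perfect isometries \emph{across different blocks}, so that conjugation actually outputs elements of $\PI(B)$; everything after that is routine. The point to be careful about there is that perfection is not an obvious composition-closed property, and one must go through the characterisation in Proposition \ref{prop_broue4.1} (tracking both $I$ and its adjoint $R$) rather than the definition via a perfect $\mu$ directly.
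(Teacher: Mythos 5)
Your proposal is correct and is exactly the paper's approach: the paper's entire proof is the one-line remark that $J\mapsto I\circ J\circ I^{-1}$ gives the desired isomorphism, and you have simply supplied the verification (well-definedness via closure of perfect isometries under composition and inversion across blocks, the homomorphism property, and the inverse map $\Psi$) that the paper leaves to the reader. Your identification of the cross-block closure properties as the only non-formal step, justified through Proposition \ref{prop_broue4.1} and Lemma \ref{lem:I-iso-R-inverse}, is precisely the "one can check" content the paper implicitly relies on.
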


\begin{proof}
One can check that the map $I:J\mapsto I\circ J\circ I^{-1}$ gives a desired isomorphism $\PI(A)\cong\PI(B)$.
\end{proof}

\section{Perfect Isometry Group for $C_p$}
In this section we will study perfect isometry group for $G=C_p$ where $C_p$ is the cyclic group of prime order $p$.  Let
$\zeta = e^{2\pi i/p}$ be a primitive $p$-root of unity and let $\mathcal{O}=\mathbb{Z}_p(\zeta)$ and $K=\mathbb{Q}_p(\zeta)$. Since $G$ is a $p$-group, there is only a single block $B=\mathcal{O}G$.

Before stating the main theorem about perfect isometry group for $B$, we will first define the following isometries in $R_K(B)$ that are crucial in the structure of $\PI(B)$.
\begin{itemize}
  \item Let $\lambda\in\Irr (B)$. Since $\lambda$ is a linear character, $\lambda\chi\in\Irr(G)$ for every $\chi\in\Irr(B)$ where $\lambda\chi(g) =\lambda(g)\chi(g), \forall g\in G$. Thus $\lambda$ induces and isometry
$I:\chi\mapsto\lambda\chi$ defined by $I_\lambda(\chi)=\lambda\chi$ for $\chi\in\Irr(B)$.
  \item Let $\sigma\in\Aut(G)$. Then $\sigma$ acts on $R_K(B)$ via $\theta^\sigma(h)=\theta(h^{\sigma^{-1}})$. Since the action by $\sigma$ permutes elements in the set $\Irr(B)$, this induces an isometry $I_{\sigma}:R_K(B)\longrightarrow R_K(B)$ defined by $I_{\sigma}(\chi)=\chi^\sigma$ for $\chi\in\Irr(B)$.
\end{itemize}

Observes that the each isometry $I_\lambda, I_\sigma$ has all-positive sign.

The main result can now be stated as follows.
\begin{thm}\label{thm:PI-gp-of-C_p} Let $G$ be a cyclic group of order $p$. Let $B=\mathcal{O}G$ be the block of $G$. Then
  \begin{enumerate}
    \item Every perfect isometry in $\PI(B)$ has a homogenous sign.
    \item Every perfect isometry in $\PI(B)$ with all-positive sign is a composition of isometries of the following forms:
    \begin{itemize}
      \item $I_\lambda:\chi\mapsto \lambda\chi, \forall \chi\in\Irr(B)$ where $\lambda\in\Irr(B)$.
      \item $I_\sigma:\chi\mapsto \chi^\sigma, \forall \chi\in\Irr(B)$ where $\sigma\in\Aut(G)$.
    \end{itemize}
    \item We have $$\PI(B)\cong (G\rtimes \Aut(G))\times\langle -id\rangle.$$
  \end{enumerate}
\end{thm}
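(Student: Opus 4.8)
The plan is to pass to explicit coordinates. Fix a generator $x$ of $G$ and write $\Irr(B)=\set{\chi_0,\dots,\chi_{p-1}}$ with $\chi_j(x)=\zeta^j$, identifying $\Irr(B)$ with $\mathbb{Z}/p$. Any $I\in\PI(B)$ is a signed permutation, $I(\chi_j)=\varepsilon_j\chi_{\pi(j)}$ with $\varepsilon_j\in\set{\pm1}$ and $\pi$ a permutation of $\mathbb{Z}/p$, and by the Lemma its associated character is
\[\mu(x^s,x^t)=\sum_{j}\varepsilon_j\,\zeta^{\pi(j)s+jt}.\]
Since $C_G(g)=G$ has order $p$, the content of ``$\mu$ perfect'' reduces to two conditions on the pair $(\pi,\varepsilon)$: \emph{Separation}, that $\mu(x^s,x^t)=0$ whenever exactly one of $s,t$ is zero, and \emph{Integrality}, that $\mu(x^s,x^t)/p\in\mathcal{O}$ for all $s,t$. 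The strategy is to make each of these transparent.

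For part (1) I would test separation on the boundary between $p$-singular and $p$-regular elements. Setting $g=1$ gives $\mu(1,x^t)=\sum_j\varepsilon_j\zeta^{jt}$, and separation forces this to vanish for every $t\neq0$. As the character-table matrix $[\zeta^{jt}]_{j,t}$ is invertible, the vectors $(\varepsilon_j)$ annihilated by all rows $t=1,\dots,p-1$ form the line spanned by the all-ones vector; since $\varepsilon_j\in\set{\pm1}$, this means $\varepsilon$ is constant, i.e. the sign is homogenous.

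For part (2) --- the heart of the argument and the step I expect to be the main obstacle --- assume the sign is all-positive, so $\mu(x^s,x^t)=\sum_j\zeta^{\pi(j)s+jt}$. Grouping the summands by the residue $v=\pi(j)s+jt$ and using that $\set{\zeta^v:1\le v\le p-1}$ is a $\mathbb{Z}_p$-basis of $\mathcal{O}$ together with $\sum_v\zeta^v=0$, I would show that $\mu(x^s,x^t)/p\in\mathcal{O}$ is equivalent to the counting statement that the fibre sizes $N_v(s,t)=\abs{\set{j:\pi(j)s+jt\equiv v}}$ are all congruent modulo $p$. Since the $N_v$ are nonnegative integers summing to $p$, this congruence forces either all $N_v=1$ or a single $N_v=p$; that is, for every $(s,t)$ the map $j\mapsto\pi(j)s+jt$ is either a bijection or constant. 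Specialising to $s=1$ and varying $t$, a slope argument closes the gap: if $\pi$ admitted two distinct difference-quotients then some $t$ would make $j\mapsto\pi(j)+tj$ neither bijective nor constant, a contradiction; hence all difference-quotients coincide and $\pi(j)=aj+b$ is affine. Conversely every affine $\pi$ works, the relevant map having slope $as+t$ (bijective unless it vanishes, in which case it is constant). Finally, translations $j\mapsto j+b$ are the $I_\lambda$ and multiplications $j\mapsto aj$ are the $I_\sigma$, so every affine permutation is a composition of these.

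For part (3) I would assemble the pieces. Part (2) identifies the all-positive perfect isometries with the affine group $\set{j\mapsto aj+b:a\in\mathbb{F}_p^\times,\ b\in\mathbb{F}_p}$ of order $p(p-1)$; under this identification the translations form the copy of $\Irr(G)\cong G$ coming from the $I_\lambda$, the multiplications form the copy of $\Aut(G)$ coming from the $I_\sigma$, and the relation $I_\sigma I_\lambda I_\sigma^{-1}=I_{\lambda^\sigma}$ exhibits the natural action, giving $G\rtimes\Aut(G)$. It remains to incorporate the sign: one checks directly that $-id$ is a perfect isometry, that it is central of order $2$, and that it is not all-positive. By part (1), every perfect isometry is all-positive or all-negative, and any all-negative $I$ equals $(-id)\circ(-I)$ with $-I$ all-positive, so $\PI(B)$ is generated by the all-positive subgroup together with the central involution $-id$, which lies outside it. Hence the group is the internal direct product $(G\rtimes\Aut(G))\times\langle -id\rangle$, as claimed.
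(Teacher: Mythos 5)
Your proposal is correct, but both halves of the hard work are done by routes genuinely different from the paper's. For part (1) the paper never touches the separation condition: it quotes Brou\'e's Lemma 1.6 to get $I(\chi)(1)=\pm\chi(1)$ and then applies integrality of $\mu_I(1,1)/p$; you instead read homogeneity of the sign off the separation requirement $\mu(1,x^t)=0$ for $t\neq 0$ together with invertibility of the character table, which avoids the appeal to Brou\'e's lemma entirely. For part (2) the paper uses a normalization trick: compose $I$ with suitable $I_\lambda$ and $I_\sigma$ so that the result fixes $\chi_0$ and $\chi_1$, then conclude it is the identity from a \emph{single} evaluation $\mu_I(g,g^{-1})\in p\mathcal{O}$ via the congruence lemma (if $C_0+C_1\zeta+\cdots+C_{p-1}\zeta^{p-1}\in p\mathcal{O}$ with $C_i\in\mathbb{Z}$ then all $C_i$ are congruent mod $p$), the positivity $C_0\ge 2$ forcing $C_0=p$. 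You apply that same congruence lemma (which you correctly re-derive from the basis property of the $\zeta^v$) at \emph{every} point $(s,t)$, obtaining the bijective-or-constant dichotomy for $j\mapsto \pi(j)s+jt$, and your slope argument then pins $\pi$ down as affine; this is somewhat longer but proves the sharper intermediate statement that the all-positive perfect isometries are \emph{exactly} the affine permutations of $\mathbb{Z}/p$, which makes part (2) and the order $p(p-1)$ of that subgroup transparent without any normalization. Part (3) is assembled essentially as in the paper: the relation $I_\sigma\circ I_\lambda\circ I_\sigma^{-1}=I_{\lambda^\sigma}$, trivial intersection of the two copies, and splitting off the central involution $-id$ using part (1). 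One detail to make explicit when you write this up: part (3) needs the converse that every affine permutation yields a \emph{perfect} isometry (so that $I_\lambda,I_\sigma\in\PI(B)$), and your slope computation only checks integrality; separation follows from the same computation, since $\mu(x^s,x^t)=\zeta^{bs}\sum_j\zeta^{(as+t)j}$ is nonzero only when $t\equiv -as \pmod p$, in which case $s\equiv 0$ if and only if $t\equiv 0$.
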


\subsection{Proof of the main theorem}
Suppose $G=\langle g\rangle$. Then we can write $\Irr(B)=\{\chi_0,\chi_1,\ldots,\chi_{p-1}\}$ where $\chi_a(g^b)=\zeta^{ab}$ (so $\chi_0$ is the trivial character).

\begin{table}[!ht]
\begin{center}
\caption{Character table of $C_p$}\label{tab:char_tab_c_p}
\begin{tabular}{|c|c|c|c|c|c|c|c|}
  \hline
   $C_q$& 1 & $g$ & $g^2$ & \ldots &$g^m$ &\ldots &$g^{p-1}$ \\
   \hline
  $\chi_0$ & 1 & 1 & 1 & \ldots & 1&\ldots &1 \\
  $\chi_1$ & 1 & $\zeta$ & $\zeta^2$ & \ldots & $\zeta^m$ &\ldots & $\zeta^{p-1}$ \\
  $\chi_2$ & 1 & $\zeta^2$ & $\zeta^4$ & \ldots & $\zeta^{2m}$&\ldots &$\zeta^{2(p-1)}$ \\
  \ldots &  & &  &  & & & \\
  $\chi_k$ & 1 & $\zeta^k$ & $\zeta^{2k}$ & \ldots & $\zeta^{mk}$&\ldots &$\zeta^{k(p-1)}$ \\
  \ldots &  & &  &  & & & \\
  $\chi_{p-1}$ & 1 & $\zeta^{p-1}$ & $\zeta^{2(p-1)}$ & \ldots & $\zeta^{m(p-1)}$&\ldots &$\zeta^{(p-1)^2}$ \\
  \hline
\end{tabular}
\end{center}
\end{table}

We will first show that any perfect isometry $I\in\PI(B)$ has a homogenous sign.
\begin{lem}\label{lem:homog-sign}
Let $I\in\PI(B)$. Then
\begin{enumerate}
\renewcommand{\theenumi}{\roman{enumi}}
\renewcommand{\labelenumi}{(\theenumi)}
  \item $I$ has a homogenous sign.
  \item Either $I(\chi)(1) = \chi(1)\,\forall \chi\in\Irr(G)$ or $I(\chi)(1) = -\chi(1)\,\forall \chi\in\Irr(G)$.
\end{enumerate}
\end{lem}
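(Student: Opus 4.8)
The plan is to reduce both statements to a single Fourier-type vanishing that is forced by the separation condition. Since every irreducible character of $C_p$ is linear, $\chi(1)=1$ for all $\chi\in\Irr(B)$; hence for a perfect isometry $I$ with sign function $\varepsilon_I$ and bijection $I^+$ we have $I(\chi)(1)=\varepsilon_I(\chi)\,I^+(\chi)(1)=\varepsilon_I(\chi)$. Thus statements (i) and (ii) are equivalent, and it suffices to prove that the signs $\varepsilon_I(\chi)$ are all equal. Writing $\Irr(B)=\{\chi_0,\dots,\chi_{p-1}\}$ as in the character table, I record $I(\chi_a)=\varepsilon_a\,\chi_{\pi(a)}$ for some permutation $\pi$ of $\{0,\dots,p-1\}$ and signs $\varepsilon_a\in\{\pm1\}$, which is legitimate because $I$ is an isometry and therefore sends each $\chi_a$ into $\pm\Irr(B)$.

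Next I would pass to the perfect generalized character $\mu=\mu_I$ attached to $I$ by the Lemma, namely $\mu(x,y)=\sum_{a}I(\chi_a)(x)\chi_a(y)$, and exploit the separation condition. In $C_p$ the identity is the only $p$-regular element, so separation forces $\mu(1,y)=0$ whenever $y\neq 1$. Specializing $x=1$ and using $\chi_{\pi(a)}(1)=1$ gives
\[
\mu(1,g^t)=\sum_{a=0}^{p-1}\varepsilon_a\,\zeta^{at},
\]
so the separation condition amounts exactly to the vanishing $\sum_{a}\varepsilon_a\,\zeta^{at}=0$ for every $t=1,\dots,p-1$.

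Finally I would extract the homogeneity from this vanishing by a cyclotomic argument. Set $f(X)=\sum_{a=0}^{p-1}\varepsilon_a X^a\in\mathbb{Z}[X]$. The relations above say that the $p-1$ primitive $p$-th roots of unity $\zeta,\zeta^2,\dots,\zeta^{p-1}$ are all roots of $f$, so the cyclotomic polynomial $\Phi_p(X)=1+X+\cdots+X^{p-1}$ divides $f$; since $\deg f\le p-1=\deg\Phi_p$, we must have $f=c\,\Phi_p$ for a constant $c$. Comparing coefficients gives $\varepsilon_a=c$ for all $a$, and as $\varepsilon_a\in\{\pm1\}$ we get either $\varepsilon_a\equiv +1$ or $\varepsilon_a\equiv -1$. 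This proves (i), and then (ii) follows from the identification $I(\chi)(1)=\varepsilon_I(\chi)$ noted at the outset.

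The argument is essentially a single application of separation followed by a Fourier inversion, so I do not expect a serious obstacle; the one point requiring care is the correct specialization $(x,y)=(1,g^t)$ together with the observation that linearity of the characters makes the factor $\chi_{\pi(a)}(1)=1$ drop out cleanly—without which the sum would not reduce to a pure character sum in the $\varepsilon_a$. Note that the integrality condition of perfectness is not needed for this lemma; it is the separation condition that does all the work here, and integrality will instead be used later to constrain the permutation $\pi$.
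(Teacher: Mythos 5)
Your proof is correct, but it takes a genuinely different route from the paper's. The paper proves this lemma with the \emph{integrality} half of perfectness: it cites Brou\'e's Lemma 1.6 to get $I(\chi)(1)=\pm\chi(1)$, then evaluates integrality at the single point $(1,1)$, so that $\mu_I(1,1)/p\in\mathcal{O}$ together with the bound $\bigl|\sum_{\chi}\pm\chi(1)^2\bigr|\le p$ forces the sum to be $\pm p$, i.e.\ all signs equal. You instead use only the \emph{separation} half: evaluating $\mu_I(1,g^t)$ for $t\neq 0$ gives the vanishing $\sum_a\varepsilon_a\zeta^{at}=0$, and the cyclotomic argument ($\Phi_p$ divides $f(X)=\sum_a\varepsilon_a X^a$, which has degree at most $p-1$, so $f=c\,\Phi_p$) forces all $\varepsilon_a$ to coincide. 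Both are one-evaluation arguments, but they invoke complementary conditions. The paper's route is shorter and is the pattern that generalizes to blocks whose character degrees are genuine powers of $p$; yours is self-contained (no appeal to Brou\'e's Lemma 1.6) and, as a bonus, is airtight at $p=2$: the paper's step tacitly discards the possibility $\sum_\chi\pm\chi(1)^2=0$, which parity rules out only for odd $p$, and indeed for $p=2$ a mixed-sign isometry such as $\chi_0\mapsto\chi_0$, $\chi_1\mapsto-\chi_1$ satisfies full integrality and is only killed by separation --- exactly the condition you use. One point you should make explicit: separation is a property of the perfect character $\mu$ with $I=I_\mu$, and you apply it to the specific $\mu_I$ given by the paper's formula; this is justified because for $B=\mathcal{O}C_p$ the map $\mu\mapsto I_\mu$ is injective on generalized characters of $G\times G$, so the perfect $\mu$ must equal $\mu_I$ (the paper makes the same identification implicitly).
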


\begin{proof}
By \cite[Lemma 1.6]{Broue}, we know that $I(\chi)(1)/\chi(1)$ is an invertible element in $\mathcal{O}$ for any $\chi\in\Irr(B)$. Since both $I(\chi)(1)$ and $\chi(1)$ are powers of $p$, we must have $I(\chi)(1)=\pm\chi(1)$. Consider \[\frac{\mu_I(1,1)}{|G|} =\frac{\sum_{\chi\in\Irr(B)}I(\chi)(1)\chi(1)}{p} =\frac{\sum_{\chi\in\Irr(B)}(\pm\chi(1)^2)}{p}.\] Since $\mu_I(1,1)/|G|\in\mathcal{O}$ and $|\sum_{\chi\in\Irr(B)}(\pm\chi(1)^2)|\le p$ this means that either
\begin{itemize}
  \item $\sum_{\chi\in\Irr(B)}(\pm\chi(1)^2)=p$ in which case all the signs are positive, or
  \item $\sum_{\chi\in\Irr(B)}(\pm\chi(1)^2)=-p$ in which case all the signs are negative.
\end{itemize} This proves $(i)$, and $(ii)$ follows from $I(\chi)(1)=\pm\chi(1)$ by above.
\end{proof}

This proves part 1 of the main theorem. Since an isometry $I$ is perfect if and only if $-I$ is perfect, it suffices to consider perfect isometries with all-positive sign. We will show that these isometries are precisely compositions of isometries of the form $I_\lambda$ and $I_\sigma$ for $\lambda\in\Irr(B)$ and $\sigma\in\Aut(G)$. First, we need to show that $I_\lambda$ is perfect for any $\lambda\in\Irr(B)$.

\begin{lem}\label{lem:PI:mult-by-linear-is-perfect}
Let $\lambda\in\Irr(B)$. Then $I_\lambda$ is a perfect isometry.
\end{lem}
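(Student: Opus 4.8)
The plan is to produce the associated character $\mu$ explicitly and then check the two defining conditions of a perfect character directly. Writing $\lambda=\chi_c$, multiplication by $\lambda$ sends $\chi_a$ to $\chi_{a+c}$ (indices read mod $p$), so $I_\lambda$ merely permutes $\Irr(B)$; in particular it is an isometry, and one with all-positive sign. It therefore remains only to verify that $\mu=\mu_{I_\lambda}$, the generalized character attached to $I_\lambda$ by the lemma above, is perfect.

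Using that formula, I would compute, for $g^s,g^t\in G$,
\[
\mu(g^s,g^t)=\sum_{a=0}^{p-1}\chi_{a+c}(g^s)\chi_a(g^t)=\zeta^{cs}\sum_{a=0}^{p-1}\zeta^{a(s+t)}.
\]
The inner sum is column orthogonality for the character table in Table \ref{tab:char_tab_c_p}: it equals $p$ when $s+t\equiv 0\pmod p$ and $0$ otherwise. Hence $\mu(g^s,g^t)=p\,\zeta^{cs}$ if $s+t\equiv 0\pmod p$, and $\mu(g^s,g^t)=0$ otherwise.

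With $\mu$ in this closed form both conditions are immediate. For integrality, since $G$ is abelian we have $|C_G(g^s)|=p$ for every $s$, so $\mu(g^s,g^t)/p$ equals $\zeta^{cs}$ or $0$, both lying in $\mathcal{O}=\mathbb{Z}_p(\zeta)$; the condition in the second coordinate is identical because $H=G$. For separation, the only $p$-regular element of $C_p$ is the identity, i.e. $g^s$ is $p$-regular exactly when $s\equiv 0\pmod p$. Whenever $\mu(g^s,g^t)\neq 0$ we have $s+t\equiv 0\pmod p$, and then $s\equiv 0\iff t\equiv 0$, so $g^s$ is $p$-regular if and only if $g^t$ is.

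I do not expect a genuine obstacle here: the whole argument reduces to a single application of column orthogonality, and the separation condition is trivialised by the fact that $C_p$ has a unique $p$-regular class. The only points requiring care are the index bookkeeping modulo $p$ and the decision to verify the original integrality/separation definition of a perfect character rather than the reformulation in Proposition \ref{prop_broue4.1}; the direct route is shorter and avoids having to discuss $R_\mu$ separately.
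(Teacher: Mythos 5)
Your proof is correct and takes essentially the same route as the paper: both compute $\mu_{I_\lambda}(g^s,g^t)=\sum_{a}I_\lambda(\chi_a)(g^s)\chi_a(g^t)$ and factor out the unit $\zeta^{cs}$. The only difference is that the paper stops at $\mu_{I_\lambda}(g^s,g^t)=\zeta^{cs}\mu_{id}(g^s,g^t)$ and cites the perfection of the identity map, whereas you evaluate the remaining sum by orthogonality to get the closed form $p\zeta^{cs}$ or $0$ and then check integrality and separation directly --- a slightly more self-contained rendering of the same argument.
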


\begin{proof}
Suppose $\lambda=\chi_a$. Consider
\begin{align*}
  \mu_{I_\lambda}(g^m,g^n) &=\sum_{r=0}^{p-1}I_\lambda(\chi_r)(g^m)\chi_r(g^n) = \sum_{r=0}^{p-1}\zeta^{m(a+r)+rn}\\
  &=\zeta^{ma}\sum_{r=0}^{p-1}\zeta^{rm+rn}=\zeta^{ma}\mu_{id}(g^m,g^n).
\end{align*} Since the identity map is perfect. It is clear that $I_\lambda$ is also perfect.
\end{proof}

The character multiplication makes $\Irr(B)$ into a cyclic group generated by $\chi_1$.

\begin{lem}\label{lem:hom-from-linear-char-to-PI}
The map $\lambda\mapsto I_\lambda$ is a group monomorphism from $\Irr(B)$ into $\PI(B)$.
\end{lem}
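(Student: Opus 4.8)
The plan is to verify the two defining properties of a group monomorphism separately: that $\lambda\mapsto I_\lambda$ respects the group operations, and that it has trivial kernel. The codomain is already under control, since Lemma~\ref{lem:PI:mult-by-linear-is-perfect} guarantees that each $I_\lambda$ is a genuine element of $\PI(B)$, so the assignment is well defined as a map $\Irr(B)\to\PI(B)$. On the domain side, $\Irr(B)$ carries the group structure coming from character multiplication, under which it is cyclic of order $p$ generated by $\chi_1$, with $\chi_a\chi_b=\chi_{a+b}$ (indices modulo $p$).

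First I would check the homomorphism property by a direct computation. For any $\lambda,\mu\in\Irr(B)$ and any $\chi\in\Irr(B)$,
\[
(I_\lambda\circ I_\mu)(\chi)=I_\lambda(\mu\chi)=\lambda(\mu\chi)=(\lambda\mu)\chi=I_{\lambda\mu}(\chi),
\]
where the middle equalities use only that character multiplication is pointwise multiplication of class functions, hence associative. Since this holds for every $\chi\in\Irr(B)$, we conclude $I_\lambda\circ I_\mu=I_{\lambda\mu}$, so the map is a homomorphism from $(\Irr(B),\cdot)$ to $(\PI(B),\circ)$.

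Next I would establish injectivity by computing the kernel. Suppose $I_\lambda=id$, i.e.\ $\lambda\chi=\chi$ for all $\chi\in\Irr(B)$. Evaluating at the trivial character $\chi_0$ gives $\lambda=\lambda\chi_0=\chi_0$, so $\lambda$ is the identity of $\Irr(B)$. Hence the kernel is trivial and the map is a monomorphism.

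There is no serious obstacle here; the only points requiring care are keeping the two group operations straight---multiplication in $\Irr(B)$ versus composition in $\PI(B)$---and invoking the preceding lemma so that the image is known to consist of \emph{perfect} isometries rather than merely linear isometries. I would remark that, as a byproduct, the image is a cyclic subgroup of order $p$ of $\PI(B)$ isomorphic to $G$, which is the copy of $G$ that will appear in the semidirect product of part~(3) of the main theorem.
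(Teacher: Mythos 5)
Your proof is correct and follows essentially the same route as the paper's: the same pointwise computation $(I_\lambda\circ I_\mu)(\chi)=(\lambda\mu)\chi=I_{\lambda\mu}(\chi)$ for the homomorphism property, and the same kernel argument via evaluation at the trivial character $\chi_0$. Your explicit appeal to Lemma~\ref{lem:PI:mult-by-linear-is-perfect} for well-definedness of the map into $\PI(B)$ is a point the paper leaves implicit, but it changes nothing of substance.
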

\begin{proof}
Let $\lambda, \lambda'\in\Irr(B)$. For each $\chi\in\Irr(B)$ we have \[I_{\lambda \lambda'}(\chi)=\lambda \lambda' \chi=\lambda  I_{\lambda'}(\chi)=I_\lambda(I_{\lambda'}(\chi))=(I_\lambda\circ I_{\lambda'})(\chi).\] This shows that the map $\lambda\mapsto I_\lambda$ is a group homomorphism. Let $\lambda$ be in the kernel. Then $I_\lambda(\chi_0)=\chi_0=\lambda\chi_0$. So $\lambda=\chi_0$.
\end{proof}

\begin{lem}\label{lem:sum_eq_0}
Let $m$ be an integer not divisible by $p$. Then
\[\sum_{k=1}^p\zeta^{km}=0.\]
\end{lem}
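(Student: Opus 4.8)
The plan is to recognise the sum as a geometric series in the single root of unity $\omega := \zeta^m$ and exploit the fact that $\omega$ is a \emph{nontrivial} $p$-th root of unity. Concretely, I would first observe that $\omega^p = \zeta^{mp} = (\zeta^p)^m = 1$, so $\omega$ is a $p$-th root of unity, and then record that $\omega \neq 1$. This last point is where the hypothesis $p \nmid m$ enters: since $\zeta = e^{2\pi i/p}$ is a \emph{primitive} $p$-th root of unity, $\zeta^m = 1$ would force $p \mid m$, contrary to assumption. This is the only step that requires any care, and even it is routine.

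With $\omega \neq 1$ established, the second step is the geometric-series computation. After reindexing (noting that the $k=p$ term contributes $\zeta^{pm} = 1 = \omega^0$), the sum becomes
\[
\sum_{k=1}^{p}\zeta^{km} \;=\; \sum_{k=0}^{p-1}\omega^{k},
\]
and the standard identity $(\omega - 1)\sum_{k=0}^{p-1}\omega^k = \omega^p - 1 = 0$ applies. Since $\omega - 1 \neq 0$ by the previous step, one may divide through to conclude that $\sum_{k=0}^{p-1}\omega^k = 0$, which is exactly the claim.

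There is essentially no serious obstacle here: the result is the classical statement that the sum of all $p$-th roots of unity vanishes, reformulated through the substitution $k \mapsto km$, which is a bijection on residues modulo $p$ precisely because $p \nmid m$. The only thing worth stating explicitly is the nonvanishing of $\omega - 1$, so that the division in the geometric-sum formula is legitimate; everything else is a direct algebraic manipulation.
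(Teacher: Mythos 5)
Your proof is correct and takes essentially the same approach as the paper: both arguments reduce to the fact that $\omega=\zeta^m$ is a $p$-th root of unity different from $1$ (using $p\nmid m$) and therefore satisfies $1+\omega+\cdots+\omega^{p-1}=0$. The only cosmetic difference is that you pass from $\sum_{k=1}^{p}$ to $\sum_{k=0}^{p-1}$ by reindexing (noting $\zeta^{pm}=1$), whereas the paper achieves the same shift by multiplying the polynomial identity $P(X^m)=0$ through by $X^m$ before evaluating at $\zeta$.
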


\begin{proof}
Let $P(X)=X^{p-1}+X^{p-2}+\ldots+X+1$. Since $\zeta^m$ is a
$p$th-root of unity which is not equal to 1, $P(\zeta^m)=0$.
Consider
\begin{align*}
XP(X) &=X^p+X^{p-1}+\ldots +X^2+X\\
X^mP(X^m) &=X^{pm}+X^{m(p-1)}+\ldots +X^{2m}+X^m.
\end{align*} Hence \[0=\zeta^mP(\zeta^m)=\sum_{k=1}^p\zeta^{km}.\]
\end{proof}

Next, we will show that the isometry $I_\sigma$ is perfect for any $\sigma\in\Aut(G)$.

\begin{lem}
Let $\sigma\in \Aut(G)$. Then $I_{\sigma}$ is a perfect isometry.
\end{lem}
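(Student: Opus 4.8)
The plan is to mirror the proof of Lemma~\ref{lem:PI:mult-by-linear-is-perfect}: compute the generalized character $\mu_{I_\sigma}$ explicitly and recognize it as a twist of $\mu_{id}$, from which perfection follows. First I would fix notation for $\sigma$. Every automorphism of $G=\langle g\rangle$ has the form $g\mapsto g^t$ for some $t$ coprime to $p$; write $s$ for the inverse of $t$ modulo $p$, so that $\sigma^{-1}$ sends $g\mapsto g^{s}$. Using the action $\theta^\sigma(h)=\theta(h^{\sigma^{-1}})$, I would compute
\[
I_\sigma(\chi_r)(g^m)=\chi_r^\sigma(g^m)=\chi_r(g^{ms})=\zeta^{rms}.
\]

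Next I would assemble $\mu_{I_\sigma}$ exactly as in the linear-character case:
\begin{align*}
\mu_{I_\sigma}(g^m,g^n)
&=\sum_{r=0}^{p-1}I_\sigma(\chi_r)(g^m)\chi_r(g^n)
=\sum_{r=0}^{p-1}\zeta^{rms}\zeta^{rn}
=\sum_{r=0}^{p-1}\zeta^{r(ms+n)}.
\end{align*}
Lemma~\ref{lem:sum_eq_0} then evaluates this sum: it equals $p$ when $ms+n\equiv 0\pmod p$ and $0$ otherwise. Comparing with $\mu_{id}(g^a,g^n)=\sum_{r}\zeta^{r(a+n)}$, I would read off the key identity $\mu_{I_\sigma}(g^m,g^n)=\mu_{id}(g^{ms},g^n)=\mu_{id}((g^m)^{\sigma^{-1}},g^n)$; that is, $\mu_{I_\sigma}$ is obtained from the (perfect) character $\mu_{id}$ by applying the automorphism $\sigma^{-1}$ in the first variable.

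Finally I would transfer perfection from $\mu_{id}$ across this identity by checking the two conditions of the definition. For separation, note that an automorphism preserves the order of every element, so $(g^m)^{\sigma^{-1}}$ is $p$-regular if and only if $g^m$ is; hence $\mu_{I_\sigma}(g^m,g^n)\neq 0$ forces the required equivalence of $p$-regularity in the two variables, since $\mu_{id}$ already satisfies it. For integrality, since $G$ is abelian every centralizer equals $G$, so $|C_G((g^m)^{\sigma^{-1}})|=|C_G(g^m)|=p$, and the quotients $\mu_{I_\sigma}(g^m,g^n)/|C_G(g^m)|$ and $\mu_{I_\sigma}(g^m,g^n)/|C_G(g^n)|$ coincide with the corresponding quotients for $\mu_{id}$, which lie in $\OO$. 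Combined with the fact that $I_\sigma$ is already an isometry because it permutes $\Irr(B)$, this shows $I_\sigma$ is a perfect isometry. The only real care needed is bookkeeping of the automorphism action---getting the inverse exponent $s$ and the correct variable right; once the identity $\mu_{I_\sigma}(g^m,g^n)=\mu_{id}((g^m)^{\sigma^{-1}},g^n)$ is in hand, perfection is immediate, which is why this is the main (and genuinely easy) obstacle.
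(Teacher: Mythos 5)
Your proposal is correct and follows essentially the same route as the paper: both identify the key identity $\mu_{I_\sigma}(g^m,g^n)=\mu_{id}((g^m)^{\sigma^{-1}},g^n)$ and then transfer perfection from the identity map, using that applying $\sigma^{-1}$ preserves $p$-regularity. Your write-up merely spells out the integrality and separation checks (and the explicit evaluation via Lemma~\ref{lem:sum_eq_0}) that the paper leaves implicit.
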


\begin{proof}
Since $\sigma$ is an automorphism, $g^{\sigma^{-1}}=g^a$ for some integer $a$ not divisible by $p$. Now
\begin{align*}
  \mu_{I_\sigma}(g^m,g^n) &= \sum_{r=0}^{p-1}I_\sigma(\chi_r)(g^m)\chi_r(g^n)=\sum_{r=0}^{p-1}\chi_r(g^{am})\chi_r(g^n)\\
  &=\mu_{id}(g^{am},g^n).
\end{align*} Since $g^{am}$ is $p$-regular if and only if $g^m$ is $p$-regular, and the identity map is perfect, it is clear that $I_\sigma$ is a perfect isometry.
\end{proof}

\begin{lem}\label{lem-hom-from-autG-to-PIB}
The map $\sigma\mapsto I_{\sigma^{-1}}$ is a group monomorphism from $\Aut(G)$ into $\PI(B)$.
\end{lem}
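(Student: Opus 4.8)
The plan is to verify the two defining properties of a monomorphism directly from the definition $\chi^\sigma(h)=\chi(h^{\sigma^{-1}})$, the only genuinely delicate point being to keep track of the order in which automorphisms compose.

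First I would compute how the maps $I_\sigma$ compose. For $\sigma,\tau\in\Aut(G)$ and $\chi\in\Irr(B)$,
\[
(I_\sigma\circ I_\tau)(\chi)(h)=(\chi^\tau)^\sigma(h)=\chi^\tau(h^{\sigma^{-1}})=\chi(h^{\sigma^{-1}\tau^{-1}})=\chi(h^{(\tau\sigma)^{-1}})=\chi^{\tau\sigma}(h),
\]
so that $I_\sigma\circ I_\tau=I_{\tau\sigma}$. Thus $\sigma\mapsto I_\sigma$ reverses the order of multiplication; this order-reversal is precisely why one twists by inversion, and it is the reason the statement uses $\sigma\mapsto I_{\sigma^{-1}}$ rather than $\sigma\mapsto I_\sigma$.

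Applying this composition rule with $\sigma^{-1}$ and $\tau^{-1}$ in place of $\sigma$ and $\tau$ gives
\[
I_{\sigma^{-1}}\circ I_{\tau^{-1}}=I_{\tau^{-1}\sigma^{-1}}=I_{(\sigma\tau)^{-1}},
\]
which is exactly the image of $\sigma\tau$. Hence $\sigma\mapsto I_{\sigma^{-1}}$ is a group homomorphism into $\PI(B)$, each $I_{\sigma^{-1}}$ lying in $\PI(B)$ by the previous lemma.

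For injectivity I would identify the kernel. Suppose $I_{\sigma^{-1}}=id$, and write $g^{\sigma}=g^{b}$ for some integer $b$ coprime to $p$. Then, evaluating on the generator $\chi_1$ of $\Irr(B)$,
\[
\chi_1=I_{\sigma^{-1}}(\chi_1)=\chi_1^{\sigma^{-1}},\qquad\text{so}\qquad \zeta=\chi_1(g)=\chi_1(g^{b})=\zeta^{b},
\]
which forces $b\equiv 1\pmod p$, i.e. $g^{\sigma}=g$ and $\sigma=id$. Therefore the homomorphism has trivial kernel and is a monomorphism. I do not expect any real obstacle here: the only thing that must be got right is the order-reversal in the composition rule, which is exactly what dictates the inverse appearing in the statement.
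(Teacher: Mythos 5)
Your proof is correct and follows essentially the same route as the paper: the same superscript computation (just packaged as the anti-homomorphism rule $I_\sigma\circ I_\tau=I_{\tau\sigma}$ before substituting inverses, where the paper expands $I_{(\sigma\tau)^{-1}}$ directly), and the same kernel argument evaluating $\chi_1$ at the generator $g$. Your explicit remark that the order-reversal is what forces the twist by $\sigma\mapsto\sigma^{-1}$ is a nice clarification that the paper leaves implicit.
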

\begin{proof}
Let $\sigma, \tau\in\Aut(G)$. For each $\chi\in\Irr(B)$ we have \[I_{(\sigma\tau)^{-1}}(\chi) =\chi^{(\sigma\tau)^-1} =\chi^{\tau^-1\sigma^-1}= I_{\sigma^{-1}}(\chi^{\tau^-1}) =I_{\sigma^{-1}}(I_{\tau^{-1}}(\chi)) =(I_{\sigma^{-1}}\circ I_{\tau^{-1}})(\chi).\] This shows that the map $\sigma\mapsto I_{\sigma^{-1}}$ is a group homomorphism. Suppose $\sigma\in\Aut(G)$ is in the kernel. Then $\chi_1(g)= I_\sigma(\chi_1)(g)=\chi_1^\sigma(g)=\chi_1(g^{\sigma^{-1}})$. This implies that $\sigma$ is the identity map.
\end{proof}

Since $I_\lambda, I_\sigma$ are perfect isometries with all-positive sign for any $\lambda\in\Irr(B)$ and any $\sigma\in\Aut(G)$. A composition of $I_\lambda, I_\sigma$ is also a perfect isometry with all-positive sign. Before proving the converse, we need the following lemma.

\begin{lem}\label{lem:sum_omega_condition}
Let $\alpha = C_0+C_1\zeta+\cdots+C_{p-1}\zeta^{p-1}$ where $C_i\in\mathbb{Z}$ for all $i$. Suppose that $\alpha/p\in\mathcal{O}$. Then $C_i\equiv C_j \mod p$ for all $i,j$.
\end{lem}

\begin{proof}
Since $\alpha/p\in\mathbb{Z}_p(\zeta)$ , we can write $\alpha = pa_0 +pa_1\zeta+\cdots +pa_{p-2}\zeta^{p-2}$ where $a_i\in\mathbb{Z}_p \forall i$, as $\{1,\zeta,\ldots,\zeta^{p-2}\}$ is a basis of $\mathbb{Z}_p(\zeta)$ over $\mathbb{Z}_p$. Write $\zeta^{p-1}=-1-\zeta-\cdots -\zeta^{p-2}$. Then
\begin{align*}
   \alpha &= (C_0-C_{p-1}) + (C_1-C_{p-1})\zeta +\cdots + (C_{p-2}-C_{p-1})\zeta^{p-2}  \\
    &= pa_0 +pa_1\zeta+\cdots +pa_{p-2}\zeta^{p-2}.
\end{align*} Comparing the coefficients, we have $C_i-C_{p-1}\in p\mathbb{Z}_p$ for $i=1,\ldots,p-2$. But $C_i-C_{p-1}$ are integers, we have $C_i-C_{p-1}\in p\mathbb{N}$ and so $C_i\equiv C_j \mod p$ for all $i,j$ as claimed.
\end{proof}

\begin{lem}\label{lem:all-pos-then-LA}
  Let $I\in\PI(B)$ be a perfect isometry with all-positive sign. Then $I$ is a composition of $I_\lambda$ and $I_\sigma$ for some $\lambda\in\Irr(B)$ and $\sigma\in\Aut(G)$.
\end{lem}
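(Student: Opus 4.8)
The plan is to exploit the all-positive hypothesis to turn $I$ into a permutation of the index set, and then show that the integrality condition built into perfectness forces that permutation to be affine, i.e. of the form $r\mapsto dr+c$; the two generating families $I_\lambda,I_\sigma$ produce exactly the translations and the unit-multiplications, so affineness is precisely what we need. First I would use the sign function: since $I$ has all-positive sign, $I(\chi_r)=\chi_{\pi(r)}$ for a unique permutation $\pi$ of $\mathbb{Z}/p\mathbb{Z}$, and then compute
\[
\mu_I(g^m,g^n)=\sum_{r=0}^{p-1}I(\chi_r)(g^m)\chi_r(g^n)=\sum_{r=0}^{p-1}\zeta^{\,m\pi(r)+nr}.
\]
Because $G$ is abelian, $|C_G(g^m)|=p$, so the integrality clause of the definition of a perfect character says exactly that $\mu_I(g^m,g^n)/p\in\mathcal{O}$ for all $m,n$.

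Next I would read off the arithmetic content of this. Grouping terms by the value of the exponent, write $\mu_I(g^m,g^n)=\sum_{k=0}^{p-1}C_k\zeta^k$, where $C_k=\#\{r:\ m\pi(r)+nr\equiv k \bmod p\}$ is a nonnegative integer and $\sum_k C_k=p$. Lemma \ref{lem:sum_omega_condition} applies verbatim and gives $C_i\equiv C_j\bmod p$ for all $i,j$; say every $C_k\equiv c\bmod p$. Since the $C_k$ are nonnegative and sum to $p$, a short case check forces a dichotomy: if $c\equiv 0$ then one $C_k$ equals $p$ and the rest vanish, so the map $r\mapsto m\pi(r)+nr$ is \emph{constant}; otherwise $c\equiv 1$ and every $C_k=1$, so that map is a \emph{bijection}. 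Thus for every $(m,n)$ the affine map $r\mapsto m\pi(r)+nr$ on $\mathbb{Z}/p\mathbb{Z}$ is either a bijection or constant.

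Now I would extract affineness of $\pi$ by a slope argument. For $m\neq0$ the map $r\mapsto m\pi(r)+nr$ is obtained from $\psi_t(r):=\pi(r)+tr$, with $t=nm^{-1}$, by post-composing with multiplication by the unit $m$; hence the dichotomy holds for every $\psi_t$, $t\in\mathbb{Z}/p\mathbb{Z}$. Consider the slope set $D=\{(\pi(r)-\pi(s))(r-s)^{-1}:r\neq s\}$, which lies in $(\mathbb{Z}/p\mathbb{Z})^{\times}$ (injectivity of $\pi$) and is nonempty. Picking any $d\in D$, the pair realizing this slope shows $\psi_{-d}$ is not injective, so by the dichotomy $\psi_{-d}$ must be constant; but $\psi_{-d}$ constant means \emph{every} slope equals $d$, whence $D=\{d\}$ and $\pi(r)=c+dr$ with $c=\pi(0)$ and $d\neq0$.

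Finally I would assemble the decomposition: choosing $\sigma\in\Aut(G)$ with $g^{\sigma^{-1}}=g^{d}$ gives $I_\sigma(\chi_r)=\chi_{dr}$, and taking $\lambda=\chi_c$ gives $I_\lambda(\chi_s)=\chi_{s+c}$, so $(I_\lambda\circ I_\sigma)(\chi_r)=\chi_{dr+c}=\chi_{\pi(r)}=I(\chi_r)$, i.e. $I=I_\lambda\circ I_\sigma$. I expect the main obstacle to be the middle step: correctly translating the $\mathcal{O}$-integrality of the exponential sum into the clean bijection-or-constant dichotomy via Lemma \ref{lem:sum_omega_condition}, and then noticing that the slope argument makes the ``constant'' alternative \emph{unavoidable}, so that non-affine permutations are ruled out automatically rather than requiring a separate case analysis.
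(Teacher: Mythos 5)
Your proof is correct, but it takes a genuinely different route from the paper's, even though both run on the same engine, namely Lemma \ref{lem:sum_omega_condition} applied to the exponential sum $\mu_I$. The paper's proof is a reduction argument: it first composes $I$ with a suitable $I_\lambda$ so that $\chi_0$ is fixed, then with a suitable $I_\sigma$ so that $\chi_1$ is also fixed, and then invokes integrality only once, at the single pair $(g,g^{-1})$. Writing $\mu_I(g,g^{-1})=\sum_i C_i\zeta^i$, the two fixed characters force $C_0\ge 2$, so the congruences $C_i\equiv C_j \bmod p$ together with $\sum_i C_i=p$ leave only the possibility $C_0=p$ and $C_i=0$ for $i\neq 0$; hence the normalized isometry is the identity, and no \emph{bijection} alternative ever has to be excluded. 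You skip the normalization entirely: you apply the same key lemma at every pair $(g^m,g^n)$, obtain the bijection-or-constant dichotomy for each map $\psi_t(r)=\pi(r)+tr$, and then your slope argument shows the constant case is forced at $t=-d$, so that $\pi(r)=dr+c$ is affine and $I=I_{\chi_c}\circ I_\sigma$ drops out explicitly. The trade-off is clear: the paper's pre-composition trick makes the number-theoretic step trivial at the cost of an auxiliary reduction, while your version pays for the absence of any reduction with the extra combinatorial slope lemma, and in return isolates a clean structural fact (perfectness forces the induced permutation of $\Irr(B)$ to be affine on $\mathbb{Z}/p\mathbb{Z}$) and produces the factorization in one shot rather than as an inverse of normalizing compositions.
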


\begin{proof}
Suppose $I$ does not fix the trivial character, say $I(\chi_0)=\chi_a$ for some $a\in\{1,\ldots,p-1\}$. Then $(I_{\chi_{p-a}}\circ I)(\chi_0)=\chi_{p-a}\chi_a=\chi_0$. So, by composing with $I_\lambda$ for a suitable $\lambda\in\Irr(B)$, we can assume that $I$ fixes the trivial character. Suppose now that $I(\chi_1)=\chi_b$ for some $b\in\{1,\ldots,p-1\}$. Let $\sigma$ be the automorphism $g\mapsto g^b$. Then
\begin{align*}
  (I_\sigma\circ I)(\chi_1)(g)&=I_\sigma(\chi_b)(g)=\chi_b(g^{b^{-1}})=\zeta=\chi_1(g)\\
  (I_\sigma\circ I)(\chi_0)(g)&=I_\sigma(\chi_0)(g)=1=\chi_0(g).
\end{align*} Thus $I_\sigma\circ I$ fixes both $\chi_0$ and $\chi_1$. So, by composing with $I_\sigma, I_\lambda$ for suitable $\lambda\in\Irr(B), \sigma\in\Aut(G)$, we can also assume that $I$ fixes $\chi_0,\chi_1$. The lemma is proved once we show that $I$ must then be the identity map. To see this, consider
\begin{align*}
  \mu_I(g,g^{-1})&=\sum_{r=0}^{p-1}I(\chi_r)(g)\chi_r(g^{-1})\\
  &= 1+1+\sum_{r=2}^{p-1}\frac{I(\chi_r)(g)}{\chi_r(g)}\\
  &= C_0+C_1\zeta+\cdots+C_{p-1}\zeta^{p-1}
\end{align*} where $0\le C_i\le p$ for all $i$ and $C_0\ge 2$ ($C_i$ is the number of occurrences of $\zeta^i$ in $1+1+\sum_{r=2}^{p-1}\frac{I(\chi_r)(g)}{\chi_r(g)}$. Since $I$ is perfect, $\mu_I(g,g^{-1})\in p\mathcal{O}$. So, by Lemma \ref{lem:sum_omega_condition}, we have $C_i\equiv C_j \mod p$ for all $i,j$. But $C_0\ge 2$. So we must have $C_0=p$ and $C_i=0$ for all $i\ne 0$. This means that $\frac{I(\chi_r)(g)}{\chi_r(g)}=1$ for all $r$. Thus $I(\chi_r)=\chi_r$ for all $r$ and so $I$ is the identity map.
\end{proof}

This proves part 2 of the main theorem.

Let $\mathcal{L}$ and $\mathcal{A}$ be the images of the monomorphisms $\Irr(B)\longrightarrow\PI(B)$ and $\Aut(G)\longrightarrow\PI(B)$ respectively. Since character multiplication is an abelian operation, we observe that $\mathcal{L}$ is an abelian group.

\begin{lem}
  The group $\PI(B)$ contains the subgroup $\mathcal{L}\rtimes\mathcal{A}$.
\end{lem}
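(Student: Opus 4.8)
The plan is to recognize the claimed subgroup as an \emph{internal} semidirect product sitting inside $\PI(B)$. Since Lemmas \ref{lem:hom-from-linear-char-to-PI} and \ref{lem-hom-from-autG-to-PIB} already give that $\mathcal{L}$ and $\mathcal{A}$ are subgroups of $\PI(B)$ (with $\mathcal{L}$ abelian), it suffices to verify the three standard conditions guaranteeing that $\langle\mathcal{L},\mathcal{A}\rangle$ is the internal semidirect product $\mathcal{L}\rtimes\mathcal{A}$: namely that $\mathcal{L}$ is normalized by $\mathcal{A}$, that $\mathcal{L}\cap\mathcal{A}=\{id\}$, and that every element of $\langle\mathcal{L},\mathcal{A}\rangle$ factors as a product of an element of $\mathcal{L}$ with one of $\mathcal{A}$. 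The last condition will follow automatically once the first is established.

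The heart of the argument, and the step I expect to require the most care, is the conjugation computation showing that $\mathcal{A}$ normalizes $\mathcal{L}$. Working with the indexing $\Irr(B)=\{\chi_0,\ldots,\chi_{p-1}\}$, multiplication by $\lambda=\chi_s$ acts by $I_{\chi_s}(\chi_r)=\chi_{r+s}$, while for $\sigma\in\Aut(G)$ with $g^{\sigma^{-1}}=g^a$ one computes $I_\sigma(\chi_r)(g^m)=\chi_r(g^{am})=\chi_{ar}(g^m)$, so $I_\sigma(\chi_r)=\chi_{ar}$, with all indices read modulo $p$. I would then chase a single character through the composite $I_\sigma\circ I_{\chi_s}\circ I_\sigma^{-1}$, using $I_\sigma^{-1}(\chi_r)=\chi_{a^{-1}r}$ where $a^{-1}$ denotes the inverse of $a$ modulo $p$, to obtain
\[
(I_\sigma\circ I_{\chi_s}\circ I_\sigma^{-1})(\chi_r)=\chi_{a(s+a^{-1}r)}=\chi_{as+r}=I_{\chi_{as}}(\chi_r).
\]
Thus $I_\sigma\circ I_{\chi_s}\circ I_\sigma^{-1}=I_{\chi_{as}}\in\mathcal{L}$, so conjugation by any element of $\mathcal{A}$ maps $\mathcal{L}$ into itself. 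Consequently $\mathcal{L}$ is normal in $\langle\mathcal{L},\mathcal{A}\rangle$, the product set $\mathcal{L}\mathcal{A}$ is a subgroup equal to $\langle\mathcal{L},\mathcal{A}\rangle$, and every element of this group can be written in the form (element of $\mathcal{L}$)(element of $\mathcal{A}$).

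It remains to check $\mathcal{L}\cap\mathcal{A}=\{id\}$, which I would verify by exploiting the action on the trivial character. Every $I_\sigma\in\mathcal{A}$ fixes $\chi_0$, since $\chi_{a\cdot 0}=\chi_0$, whereas $I_{\chi_s}\in\mathcal{L}$ fixes $\chi_0$ only when $\chi_s=\chi_0$, i.e.\ $s=0$. Hence any isometry lying in both $\mathcal{L}$ and $\mathcal{A}$ must be $I_{\chi_0}=id$. Combining the normality, the trivial intersection, and the factorization noted above shows that $\langle\mathcal{L},\mathcal{A}\rangle=\mathcal{L}\rtimes\mathcal{A}$ is a subgroup of $\PI(B)$, as required.
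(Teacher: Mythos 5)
Your proof is correct and takes essentially the same route as the paper's: both establish that $\mathcal{A}$ normalizes $\mathcal{L}$ by a conjugation computation and then verify $\mathcal{L}\cap\mathcal{A}=\{id\}$ by looking at the action on the trivial character $\chi_0$. The only difference is cosmetic: you carry out the conjugation in coordinates (indices modulo $p$), whereas the paper expresses the same identity abstractly as $I_\sigma\circ I_\lambda\circ (I_\sigma)^{-1}=I_{\lambda^\sigma}$, which would also work verbatim for blocks of more general groups.
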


\begin{proof}
For any $\lambda\in\Irr(B),\sigma\in\Aut(G)$ and $\chi\in\Irr(B)$, we have
\begin{align*}
  (I_\sigma\circ I_\lambda\circ(I_\sigma)^{-1})(\chi) &=  (I_\sigma\circ I_\lambda)(\chi^{\sigma^{-1}})=I_\sigma(\lambda\chi^{\sigma^{-1}})=\lambda^\sigma\chi=I_{\lambda^\sigma}(\chi).
\end{align*} Thus $\mathcal{L}$ is normal in $\mathcal{A}$. Since $\mathcal{L}$ is abelian, $\mathcal{L}$ is also normal in $\mathcal{L}\mathcal{A}$. Suppose $I\in \mathcal{L}\cap\mathcal{A}$, say $I=I_\lambda=I_\sigma$ for some $\lambda\in\Irr(B)$ and $\sigma\in\Aut(G)$. Then $I_\lambda(\chi_0)=I_\sigma(\chi_0)$. This implies that $\lambda=\chi_0$ and so $I$ is the identity map. Hence $\mathcal{L}$ intersects $\mathcal{A}$ trivially and so $\mathcal{L}\mathcal{A}=\mathcal{L}\rtimes\mathcal{A}$ is a subgroup of $\PI(B)$.
\end{proof}

We will now prove part 3 of the main theorem.

\begin{lem}
  We have $$\PI(B)= \mathcal{L}\rtimes\mathcal{A}\times\langle -id\rangle$$ and $$\PI(B)\cong G\rtimes\Aut(G)\times\langle -id\rangle.$$
\end{lem}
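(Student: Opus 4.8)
\subsection*{Proof proposal for the final lemma}

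The plan is to first establish the internal direct product decomposition and then read off the abstract isomorphism type. The key observation is that $\mathcal{L}\rtimes\mathcal{A}$---already known by the previous lemma to be a subgroup---is \emph{exactly} the set of all-positive perfect isometries: every composition of the $I_\lambda$ and $I_\sigma$ has all-positive sign, while conversely Lemma \ref{lem:all-pos-then-LA} says every all-positive perfect isometry is such a composition. With this identification in hand, I would show $\PI(B)=(\mathcal{L}\rtimes\mathcal{A})\langle -id\rangle$. Given $I\in\PI(B)$, Lemma \ref{lem:homog-sign} forces $I$ to have homogeneous sign; if $I$ is all-positive then $I\in\mathcal{L}\rtimes\mathcal{A}$, and if $I$ is all-negative then $-I$ is all-positive, so $-I\in\mathcal{L}\rtimes\mathcal{A}$ and hence $I=(-id)\circ(-I)\in(-id)(\mathcal{L}\rtimes\mathcal{A})$.

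Next I would verify the three conditions for an internal direct product. Since $-id$ commutes with every linear map, $\langle -id\rangle$ is central, hence normal; and $\mathcal{L}\rtimes\mathcal{A}$, having index $2$ by the covering above, is normal as well. The intersection $(\mathcal{L}\rtimes\mathcal{A})\cap\langle -id\rangle$ is trivial by a sign count: $-id$ is all-negative while every element of $\mathcal{L}\rtimes\mathcal{A}$ is all-positive, so only the identity lies in both. These three facts give $\PI(B)=(\mathcal{L}\rtimes\mathcal{A})\times\langle -id\rangle$, with $\langle -id\rangle\cong C_2$ since $(-id)^2=id\ne -id$.

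Finally I would identify the isomorphism type. We have $\mathcal{L}\cong\Irr(B)$, which is cyclic of order $p$ and hence isomorphic to $G$ via $\chi_k\leftrightarrow g^k$, and $\mathcal{A}\cong\Aut(G)$ via the monomorphism $\sigma\mapsto I_{\sigma^{-1}}$ of Lemma \ref{lem-hom-from-autG-to-PIB}. It then remains to match the conjugation action of $\mathcal{A}$ on $\mathcal{L}$ with the natural action of $\Aut(G)$ on $G$. Using $I_\sigma\circ I_\lambda\circ I_\sigma^{-1}=I_{\lambda^\sigma}$ from the previous lemma, the element $I_{\sigma^{-1}}$ corresponding to $\sigma$ sends $I_\lambda\mapsto I_{\lambda^{\sigma^{-1}}}$; writing $\sigma\colon g\mapsto g^a$ and $\lambda=\chi_k$ one computes $\lambda^{\sigma^{-1}}=\chi_{ka}$, which matches the natural action $g^k\mapsto g^{ka}$ of $\sigma$ on $G$. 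Hence $\mathcal{L}\rtimes\mathcal{A}\cong G\rtimes\Aut(G)$, and combining with the direct factor $\langle -id\rangle$ yields $\PI(B)\cong(G\rtimes\Aut(G))\times\langle -id\rangle$.

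The step I expect to need the most care is this last identification, precisely because the map $\Aut(G)\to\mathcal{A}$ runs through $\sigma^{-1}$ rather than $\sigma$: one must track that inverse to confirm the induced action on $\Irr(B)$ is multiplication by $a$ (matching $G$) rather than by $a^{-1}$. Had the monomorphism instead used $\sigma$ directly, the action on $\Irr(B)$ would be the inverse-twist, and one would need the additional general fact that precomposing a semidirect-product action with an automorphism of the acting group---here inversion on the abelian group $\Aut(G)$---yields an isomorphic semidirect product.
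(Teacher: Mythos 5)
Your proposal is correct and follows essentially the same route as the paper: use Lemma \ref{lem:homog-sign} to split $\PI(B)$ into the all-positive subgroup times $\langle -id\rangle$, identify that subgroup with $\mathcal{L}\rtimes\mathcal{A}$ via Lemma \ref{lem:all-pos-then-LA}, and then transport to $G\rtimes\Aut(G)$. If anything you are more careful than the paper at the last step: the paper only observes $\mathcal{L}\cong\Irr(B)\cong G$ and $\mathcal{A}\cong\Aut(G)$, while you additionally check that under $\chi_k\leftrightarrow g^k$ and $\sigma\mapsto I_{\sigma^{-1}}$ the conjugation action of $\mathcal{A}$ on $\mathcal{L}$ agrees with the natural action of $\Aut(G)$ on $G$, which is what an isomorphism of semidirect products genuinely requires.
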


\begin{proof}
By Lemma \ref{lem:homog-sign} every perfect isometry $I\in\PI(B)$ has a homogenous sign. Thus $\PI(B)=S\times\langle -id\rangle$ where $S$ is a subgroup containing all perfect isometries with all-positive sign. Since perfect isometries in $\mathcal{L}\rtimes\mathcal{A}$ has all-positive sign, we have $\mathcal{L}\rtimes\mathcal{A}\le S$. But Lemma \ref{lem:all-pos-then-LA} says that any perfect isometry with all-positive sign must be in $\mathcal{L}\rtimes\mathcal{A}$, we have $S\le\mathcal{L}\rtimes\mathcal{A}$ and so $S=\mathcal{L}\rtimes\mathcal{A}$. Finally, since $\Irr(B)\longrightarrow\PI(B)$ and $\Aut(G)\longrightarrow\PI(B)$ are monomorphisms, it is clear that $\mathcal{L}\cong\Irr(B)\cong G$ and $\mathcal{A}\cong\Aut(G)$.

\end{proof}

% ----------------------------------------------------------------
\bibliographystyle{amsplain}
%\bibliography{}

\begin{thebibliography}{10}
    \bibitem{Broue} M. Broue, {\it Isometries parfaites, types de blocs, categories derivees}, Asterisque, 181-182 (1990), 61--92.
    %\bibitem{osima1955notes} M. Osima, {\it Notes on blocks of group characters}, Math. J. Okayama Univ., 4 (1995), 175--188.
\end{thebibliography}

\end{document}